\theoremstyle{plain}
\newtheorem{theorem}{Theorem}[section]
\newtheorem{lemma}[theorem]{Lemma}
\theoremstyle{definition}
\newcommand{\sign}{\mbox{\rm sign}}
\begin{document}
\title[Maximal subfields in division algebras]{Maximal subfields in division algebras generated by images of polynomials}
\author[L.Q. Danh]{Le Qui Danh\textsuperscript{1\dag*}}
\author[T. T. Deo]{Trinh Thanh Deo\textsuperscript{2,3\S}} 
\keywords{division ring; maximal subfield; multilinear polynomial; word\\ \protect \indent * Corresponding author: L.Q. Danh}
\thanks{This research was funded by Vietnam National Foundation for Science and Technology Development (NAFOSTED) under Grant No. 101.04-2023.18}
\subjclass[2020]{16K20, 16R50}	

\maketitle

\begin{center}
{\small 

\textsuperscript{1}Faculty of Fundamental Sciences,\\ University of Architecture Ho Chi Minh City,\\ 196 Pasteur Str., Dist. 3, Ho Chi Minh City, Vietnam \\
\textsuperscript{2}Faculty of Mathematics and Computer Science, University of Science, Ho Chi Minh City, Vietnam\\
\textsuperscript{3} Vietnam National University,\\ Ho Chi Minh City, Vietnam\\
\textsuperscript{\dag}\url{danh.lequi@uah.edu.vn}\\ 
\textsuperscript{\S}\url{ttdeo@hcmus.edu.vn}
}
\end{center}

\begin{abstract}
Let $D$ be a division ring with center $F$, $f(x_1,x_2,\dots, x_m)$ a non-central multilinear polynomial over $F$, and  $w(x_1,x_2,\dots,x_m)$ a non-trivial word. In this paper, we investigate conditions under which there exists an element $a \in D$ such that the subfield $F(a)$ generated by $a$ is a maximal subfield of $D$. Specifically, we prove that there always exists an element $a$ in the set
\[
\{f(a_1,\dots,a_m)\mid a_1,\dots, a_m\in D \} \cup \{w(a_1,\dots,a_m)\mid a_1,\dots, a_m\in D \backslash \{0\} \}
\]
such that $F(a)$ is a maximal subfield of $D$. This result shows that maximal subfields can be generated by evaluating polynomial or group word expressions at elements of $D$.
\end{abstract}
\maketitle

\section{Introduction}

\subsection{An open problem and the aim of the paper} $\nonumber$

Let $D$ be a division ring with center $F$. A subfield of $D$ is said to be \textit{maximal} if it is not properly contained in any other subfield of $D$. In this paper, we consider the following natural question.

\noindent\textbf{Problem.} Let $D$ be a finite-dimensional division ring over its center $F$. What conditions must an element $a \in D$ satisfy so that the subfield $F(a)$ generated by $a$ over $F$ is a maximal subfield of $D$?

This question arises from problems 28 and 29 in the paper \cite{Pa_Ma_00} by M. Mahdavi-Hezavehi. Some special cases related to this question have been studied and results have been obtained. For instance, in \cite{Aaghabali.Bien.2019paper}, the authors proved that if $a$ is of the form $xyx^{-1}y^{-1}$ or $xy-yx$ for some $x$ and $y$ in $D^*$, then the subfield $F(a)$ is maximal in $D$. More general,  the conclusion of this result holds when considering $x$ or $y$ in a subnormal subgroup of $D^*$ (see \cite{Pa_Bien.Hai.Trang_2021}). In this paper, we prove that one can choose an element $a$ of the form
\[
a = f(a_1, a_2, \dots, a_m) \quad \text{with } a_1, a_2, \dots, a_m \in D \setminus \{0\},
\]
where $f(x_1, x_2, \dots, x_m)$ is a polynomial over $F$ of one of the following two types:
\begin{itemize}
\item The first type is a \textit{non-central multilinear polynomial}:
\[
f(x_1, x_2, \dots, x_m) = \sum_{\sigma \in S_m} a_\sigma x_{\sigma(1)} x_{\sigma(2)} \dots x_{\sigma(m)},
\]
where $S_m$ is the symmetric group on $m$ letters, and $a_\sigma \in F$.
\item The second type is a \textit{group monomial}:
\[
f(x_1, x_2, \dots, x_m) =x_{i_1}^{n_1} x_{i_2}^{n_2} \dots x_{i_t}^{n_t}, \quad \text{with } 1 \le i_j \le m \text{ and } n_j \in \mathbb{Z}.
\]
\end{itemize}

Our approach relies on a blend of classical techniques and recent developments concerning the images of Laurent polynomials. While the tools may seem familiar, they lead to new structural insights into maximal subfields of division rings.

\subsection{Motivation and structure of the paper}$\nonumber$

One common approach to studying the structure of a division ring is to investigate its maximal subfields. A natural question is whether the properties of such maximal subfields can reflect the overall structure of the ring. This approach has been taken in several works, such as \cite{Pa_HaTrBi_19, Pa_BeDrShSh_13, Pa_AaAkBi_18, Pa_Thu}, where the authors examine algebraic properties of division rings through their subfields.

One particular aspect of this approach is the study of maximal subfields in division rings. Maximal subfields play a crucial role in many classical theorems. A prominent example is the Brauer-Albert theorem: If $D$ is a division ring with $\mathrm{dim}_F D = n^2$, and if $K = F(a)$ is a simple maximal subfield, then there exists $b \in D$ such that the set $\{ a^i b a^j \mid 1 \le i, j \le n \}$ forms a basis for $D$ over $F$ (see \cite[Theorem 15.16]{Lam.2001.book}). This shows that elements generating maximal subfields contain rich structural information about $D$, and identifying or describing such elements is a non-trivial task.

Moreover, simple subfields have been extensively studied in the theory of division rings. A series of works by M. Mahdavi-Hezavehi has focused on this topic, and his paper \cite{Pa_Ma_00} offers valuable insights from various perspectives. The two open problems 28 and 29 in that paper directly inspired the line of investigation pursued in the present work.

The paper is organized as follows:  
\begin{itemize}
    \item In Section 2, we present some background on the theory of identities, which is essential since our main results are based on this theory.. A solid understanding of these basic tools will make it easy for the reader to engage	 with the later sections.
    
    \item In Section 3, we provide auxiliary results on the image of Laurent polynomials, which are used to establish the existence of algebraic matrices of prescribed degrees.  
    \item Finally, Section 4 contains the main results of the paper, where we give and prove sufficient criteria for an element to generate a maximal subfield in a division ring and we then present an application of these results.
\end{itemize}

\section{Preliminaries}

\subsection{Laurent polynomial identities}~

In this paper, the main results are proved by applying some results on Laurent polynomial identities. Let $F$ be a field, and $X = \{x_1, x_2, \dots, x_m\}$ be a set of $m$ noncommuting indeterminates. Denote by $F[X]$ the free algebra on $X$ over $F$, and by $F\langle X \rangle$ the group algebra of $\langle X\rangle$ over $F$, where $\langle X\rangle$ is the free group generated by $X$.  We can see $F[X]$ as a subalgebra of $F\langle X\rangle$. Elements in $F[X]$ are called (noncommutative) \textit{polynomials}, and in $F\langle X\rangle$ are called (noncommutative) \textit{Laurent polynomials}. An element $f(X)$ in $F\langle X \rangle$ has the general form
\[
f(X) = \sum_{i=1}^n c_i x_{i_1}^{n_{i_1}} x_{i_2}^{n_{i_2}} \dots x_{i_t}^{n_{i_t}},
\]
where $c_i \in F$, $x_{i_j} \in X$, and $n_{i_j} \in \mathbb{Z}$. If all exponents $n_{i_j}$ are non-negative, then $f(X)$ is a polynomial in $F[X]$.

Let $R$ be an algebra over $F$ with the multiplicative group $R^*$, and $f(X)$ be a Laurent polynomial in $F\langle X\rangle$. For all $a_1, a_2,\dots, a_m \in R^*$ we denote by $f(a_1,a_2,\ldots,a_m)$ the value of $f$ by substituting $x_i$ with $a_i$, that is, if $f(X) = \sum\limits_{i=1}^n c_i x_{i_1}^{n_{i_1}} x_{i_2}^{n_{i_2}} \dots x_{i_t}^{n_{i_t}}$, then $f(a_1,a_2,\ldots,a_m)=\sum\limits_{i=1}^n c_i a_{i_1}^{n_{i_1}} a_{i_2}^{n_{i_2}} \dots a_{i_t}^{n_{i_t}}$. 

The Laurent polynomial $f(X)$ is said to be a \emph{Laurent polynomial identity} of $R$ if $f(a_1, a_2,\dots, a_m) = 0$ for all $a_1, a_2, \dots, a_m \in R^*$. In this case, we also say that $R$ \emph{satisfies} the Laurent identity $f = 0$. The set of all Laurent polynomial identities of $R$ is denoted by $\mathcal{I}(R)$.

One of the key tools in this paper is the following consequence of a classical result:

\begin{lemma}\label{l2.2}
Let $F$ be an infinite field and $D$ a division ring with center $F$ such that $\dim_F D = n^2$. If $L$ is a field extension of $F$, then
\[
\mathcal{I}(D) = \mathcal{I}(\mathrm{M}_n(F)) = \mathcal{I}(\mathrm{M}_n(L)).
\]
\end{lemma}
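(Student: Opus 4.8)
The plan is to reduce everything to the classical fact that $D$ splits over a suitable field: fixing a splitting field $L_0 \supseteq F$ one has an $F$-algebra isomorphism $D \otimes_F L_0 \cong \mathrm{M}_n(L_0)$ under which the embedding $D \hookrightarrow D\otimes_F L_0$ sends each nonzero $d \in D$ to an invertible matrix (its inverse $d^{-1}$ survives the base change, so the image is invertible). Thus the three identity sets should all be detected by one ``generic'' computation, and the whole statement will follow once I establish (i) that the Laurent identities of $\mathrm{M}_n$ do not depend on the (infinite) ground field, and (ii) that $D$ and $\mathrm{M}_n(L_0)$ have the same Laurent identities.

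For (i), the inclusion $\mathcal{I}(\mathrm{M}_n(L)) \subseteq \mathcal{I}(\mathrm{M}_n(F))$ is immediate since $\mathrm{GL}_n(F) \subseteq \mathrm{GL}_n(L)$. For the reverse I would evaluate a Laurent polynomial $f$ on matrices with indeterminate entries. Because the entries of an inverse matrix are cofactors divided by the determinant, each matrix entry of $f(a_1,\dots,a_m)$ is a regular function, with coefficients in the prime field (hence in $F$), on the variety $\mathrm{GL}_n^m$, which is a nonempty open subvariety of the affine space $\A^{mn^2}$. Since $F$ is infinite, the $F$-points of such an open subvariety are Zariski dense, so these $n^2$ regular functions vanish on the $F$-points if and only if they are identically zero in the coordinate ring over $F$; being zero there, they also vanish on the $L$-points. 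Hence $f \in \mathcal{I}(\mathrm{M}_n(F))$ implies $f \in \mathcal{I}(\mathrm{M}_n(L))$, giving equality for every extension $L/F$.

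For (ii), I would run the same density argument intrinsically inside $D$. Fix an $F$-basis $e_1,\dots,e_{n^2}$ of $D$ and introduce generic elements $\xi^{(j)} = \sum_i \xi^{(j)}_i e_i$ for $1 \le j \le m$, with commuting indeterminate coefficients. Writing $f(\xi^{(1)},\dots,\xi^{(m)}) = \sum_k h_k\, e_k$, the coordinates $h_k$ are rational functions in the $\xi^{(j)}_i$ over $F$ whose only denominators are powers of the generic reduced norms $\mathrm{Nrd}(\xi^{(j)})$ (these control the inverses), so each $h_k$ is regular on the open locus $U \subseteq \A^{mn^2}$ where $\prod_j \mathrm{Nrd}(\xi^{(j)}) \ne 0$. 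An $F$-point of $U$ is exactly a tuple of nonzero (equivalently invertible) elements of $D$, so $f \in \mathcal{I}(D)$ if and only if all $h_k$ vanish on the dense set of $F$-points of $U$, that is, if and only if every $h_k = 0$. The crucial point is that these $h_k$ are defined over $F$ and depend only on the structure constants of $D$ and on the generic reduced norm; transporting the basis through the splitting isomorphism $D \otimes_F L_0 \cong \mathrm{M}_n(L_0)$ --- under which $\mathrm{Nrd}$ becomes $\det$ and $U$ becomes $\mathrm{GL}_n^m$ --- shows that the very same functions $h_k$ govern evaluations on $\mathrm{GL}_n(L_0)$. Therefore $f \in \mathcal{I}(D)$ if and only if all $h_k = 0$ if and only if $f \in \mathcal{I}(\mathrm{M}_n(L_0))$, and combining with (i) yields $\mathcal{I}(D) = \mathcal{I}(\mathrm{M}_n(F)) = \mathcal{I}(\mathrm{M}_n(L))$.

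The hard part will be step (ii), and specifically the bookkeeping that the coordinate functions $h_k$ computed inside $D$ are literally identical, as $F$-rational functions, to those computed inside $\mathrm{M}_n(L_0)$ after the splitting isomorphism. This rests on two facts that I would isolate as the technical core: that nonzero elements of $D$ remain invertible in $D \otimes_F L_0$, so that the reduced-norm locus $U$ matches $\mathrm{GL}_n^m$ under the isomorphism; and that over the infinite field $F$ the $F$-points of the nonempty open set $U$ are Zariski dense, so that vanishing on $D^*$ upgrades to the identical vanishing of each $h_k$. Once these are in place, the equivalence of the three identity sets is automatic.
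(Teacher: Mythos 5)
Your proposal is correct, but it takes a genuinely different route from the paper: the paper's entire proof of this lemma is a one-line citation of Amitsur's theorem on rational identities (\cite[Theorem 11]{Amitsur.1966paper}), of which the statement is a special case, whereas you give a self-contained argument via generic elements, splitting fields, and Zariski density of $F$-points over an infinite field. In effect you are reconstructing the relevant special case of Amitsur's theorem rather than invoking it. Your step (i) is the standard specialization argument and is fine; step (ii) is the real content, and it works provided you justify two standard facts you currently only assert: that the reduced characteristic polynomial (hence the reduced norm and the formula $\xi^{-1}=p(\xi)/\mathrm{Nrd}(\xi)$ for a polynomial $p$ over $F$ in the coordinates) is defined over $F$ by Galois descent, and that for $\xi\in D$ one has $\mathrm{Nrd}(\xi)\neq 0$ iff $\xi\neq 0$, so the $F$-points of your open locus $U$ are exactly the tuples from $D^*$ while its $L_0$-points are exactly tuples from $\mathrm{GL}_n(L_0)$. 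One trivial slip: the coefficients of $f$ lie in $F$, not necessarily in the prime field, but only definedness over $F$ is used, so nothing breaks. The trade-off is the usual one: the citation is shorter and covers a more general statement (Amitsur compares arbitrary central simple algebras of the same degree over infinite centers), while your argument is elementary, transparent about where the hypothesis that $F$ is infinite enters, and does not require the reader to unpack Amitsur's machinery of generalized rational identities.
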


\begin{proof}
This is a special case of \cite[Theorem 11]{Amitsur.1966paper}.
\end{proof}

\subsection{Algebraic elements of bounded degree}$\nonumber$

Let $F$ be a field and $R$ be a ring whose center contains $F$. An element $a \in R$ is said to be \emph{algebraic over $F$} if there exists a nonzero polynomial $f(x) \in F[x]$ such that $f(a) = 0$. Furthermore, $a$ is called \emph{algebraic of degree $n$ over $F$} if there exists such a polynomial $f(x)$ of degree $n$, and there is no nonzero polynomial of smaller degree satisfying $f(a) = 0$.

It is well known that every element in $R = \mathrm{M}_n(F)$ is algebraic over $F$, since each matrix satisfies its own characteristic polynomial. However, not every matrix has degree exactly $n$, although matrices with this property do exist. This observation plays an essential role in our techniques, especially when working with a particular class of Laurent polynomials introduced below.

Let $n$ be a positive integer, and let $x, x_1, \dots, x_n$ be noncommuting indeterminates. Define:
\[
g_n(x, x_1, \ldots, x_n) = \sum_{\sigma \in S_{n+1}} \sign(\sigma) . x^{\sigma(0)} x_1 x^{\sigma(1)} x_2 \dots x_n x^{\sigma(n)},
\]
where $S_{n+1}$ denotes the symmetric group on $\{0, 1, \dots, n\}$, and $\sign(\sigma)$ denotes the sign of the permutation $\sigma$. This rational expression, introduced in \cite{Beidar.Martindale.Mikhalev}, relates algebraic elements of bounded degree to noncommutative polynomials in \mbox{$n+1$} indeterminates. It may be viewed as a generalization of the characteristic polynomial in the setting of noncommutative algebras.

\begin{lemma}\label{le2.2}
Let $D$ be a division ring with center $F$, and $r$ be a positive integer. For any element $a \in \mathrm{M}_r(D)$, the following conditions are equivalent.
\begin{enumerate}
\item[{\rm(1)}] The element $a$ is algebraic over $F$ of degree at most $n$.
\item[{\rm(2)}] For all $r_1, r_2, \dots, r_n \in R$, we have $g_n(a, r_1, r_2, \dots, r_n) = 0$.
\end{enumerate}
\end{lemma}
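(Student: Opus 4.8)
The plan is to read $g_n$ as a Capelli-type alternating expression and thereby reduce the whole statement to a single linear-algebra dichotomy. First I would record the elementary reformulation of condition~(1): by definition $a$ is algebraic over $F$ of degree at most $n$ exactly when there is a nonzero polynomial $f(x)=\sum_{i=0}^{n}c_i x^i\in F[x]$ with $f(a)=0$, that is, exactly when the powers $1,a,a^2,\dots,a^n$ are linearly dependent over $F$. So, writing $R=\mathrm{M}_r(D)$, it suffices to show that $g_n(a,r_1,\dots,r_n)=0$ for all $r_1,\dots,r_n\in R$ if and only if $1,a,\dots,a^n$ are $F$-dependent. To organize this, for fixed $r_1,\dots,r_n\in R$ I introduce the map
\[
\Phi(z_0,z_1,\dots,z_n)=\sum_{\sigma\in S_{n+1}}\sign(\sigma)\,z_{\sigma(0)}\,r_1\,z_{\sigma(1)}\,r_2\cdots r_n\,z_{\sigma(n)},
\]
which satisfies $g_n(a,r_1,\dots,r_n)=\Phi(1,a,\dots,a^n)$. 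Since $F=Z(D)$ sits in the center of $R$, the form $\Phi$ is $F$-multilinear in $z_0,\dots,z_n$.

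The implication $(1)\Rightarrow(2)$ then falls out of the observation that $\Phi$ is \emph{alternating}: if $z_i=z_j$ for some $i\neq j$, pairing each $\sigma$ with $(i\,j)\sigma$ matches its monomial with an identical one of opposite sign, so all terms cancel and $\Phi$ vanishes (this holds in every characteristic). An alternating $F$-multilinear form kills every linearly dependent tuple: if one power is an $F$-combination of the others, expanding by multilinearity produces only terms with a repeated argument. Hence $F$-dependence of $1,a,\dots,a^n$ forces $\Phi(1,a,\dots,a^n)=g_n(a,r_1,\dots,r_n)=0$ for every choice of the $r_i$.

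The substantive direction is $(2)\Rightarrow(1)$, which I would argue contrapositively: assuming $1,a,\dots,a^n$ are $F$-independent, I must produce $r_1,\dots,r_n\in R$ with $\Phi(1,a,\dots,a^n)\neq0$. This is precisely the nondegeneracy of the generalized-determinant construction $g_n$, and here the structure of $R$ is essential: $R=\mathrm{M}_r(D)$ is a simple Artinian---hence prime---ring whose center (equivalently, extended centroid) is $F$. I would exploit the density theorem for the action of $R$ on its simple module $V\cong D^r$ to choose the connecting elements $r_1,\dots,r_n$ so that exactly one permutation contributes a nonzero value to the alternating sum while the others are annihilated, using the independence of $1,a,\dots,a^n$ to separate their actions on suitable vectors. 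Making this separation argument precise---ensuring that primeness rules out hidden cancellation across the $S_{n+1}$-orbit---is the main obstacle; it is exactly the property established for this expression in \cite{Beidar.Martindale.Mikhalev}, which I would invoke to finish.
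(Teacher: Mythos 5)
Your proposal is correct, and it ends up resting on the same source the paper uses: the paper's entire proof is a one-line citation of \cite[Corollary 2.3.8]{Beidar.Martindale.Mikhalev}, for both directions. What you do differently is to make the implication $(1)\Rightarrow(2)$ self-contained: identifying $g_n(a,r_1,\dots,r_n)$ with the value of the $F$-multilinear form $\Phi$ at $(1,a,\dots,a^n)$, checking that $\Phi$ is alternating via the sign-reversing involution $\sigma\mapsto (i\,j)\circ\sigma$ (valid in all characteristics, as you note), and concluding that it annihilates the $F$-dependent tuple $1,a,\dots,a^n$ — this is a genuine, complete elementary argument, and the reformulation of ``algebraic of degree at most $n$'' as $F$-dependence of $1,a,\dots,a^n$ is accurate. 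For the substantive direction $(2)\Rightarrow(1)$ you correctly diagnose that nondegeneracy is the real content, that it requires $R=\mathrm{M}_r(D)$ to be prime with center (equivalently extended centroid) $F$, and you defer to the same corollary of \cite{Beidar.Martindale.Mikhalev}; your density-theorem sketch for isolating a single permutation's contribution is the right shape of argument but is not carried out, so on that direction your proof is no more self-contained than the paper's. Net effect: your write-up buys transparency on the easy half and an honest identification of where the hard half lives, at no loss of correctness.
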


\begin{proof}
This follows directly from \cite[Corollary 2.3.8]{Beidar.Martindale.Mikhalev}.
\end{proof}

We conclude this section with a lemma ensuring that applying $g_n$ to a nonzero Laurent polynomial yields a nonzero Laurent polynomial:

\begin{lemma}\label{l2.3}
Let $D$ be a division ring with center $F$, $g_n(x, x_1, \dots, x_n)$ be defined as above, and $f(y_1, y_2, \dots, y_m)$ be a nonzero Laurent polynomial in $F\langle y_1, y_2, \dots, y_m \rangle$. Then the expression
\[
g_n(f(y_1, \dots, y_m), x_1, \dots, x_n)
\]
is a nonzero Laurent polynomial in $F\langle x_1, \dots, x_n, y_1, \dots, y_m \rangle$.
\end{lemma}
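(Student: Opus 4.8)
The plan is to work inside the free group $G$ generated by $x_1,\dots,x_n,y_1,\dots,y_m$, whose group algebra is exactly $F\langle x_1,\dots,x_n,y_1,\dots,y_m\rangle=F[G]$, and to establish nonvanishing by a leading-term argument. Since free groups are bi-orderable, I would fix a total order $\prec$ on $G$ that is invariant under both left and right multiplication. For nonzero $h=\sum_g c_g\,g\in F[G]$ write $\mathrm{LT}(h)=c_{g_0}g_0$ for its leading term, where $g_0=\max_\prec\supp(h)$. Two-sided invariance gives $\max\supp(h_1h_2)=\bigl(\max\supp h_1\bigr)\bigl(\max\supp h_2\bigr)$, with that product occurring with multiplicity one, so $\mathrm{LT}$ is multiplicative: $\mathrm{LT}(h_1h_2)=\mathrm{LT}(h_1)\,\mathrm{LT}(h_2)$. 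The whole problem then reduces to exhibiting a single group element occurring in $g_n(f,x_1,\dots,x_n)$ with nonzero coefficient.

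Write $\mathrm{LT}(f)=c\,u$ with $c\in F^\ast$ and $u\in\langle y_1,\dots,y_m\rangle$. The first point to secure is that the order may be chosen so that $u\neq e$; this is where I expect the hypothesis that $f$ is non-constant (equivalently non-central — a nonzero scalar is central and makes $g_n$ vanish identically, since $\sum_{\sigma\in S_{n+1}}\sign(\sigma)=0$) to enter. As $f$ is non-constant, $\supp(f)$ contains a nontrivial element, and replacing $\prec$ by its reverse order if necessary I may assume the $\prec$-largest element $u$ of $\supp(f)$ is nontrivial. Using multiplicativity of $\mathrm{LT}$, together with the fact that each exponent tuple $(\sigma(0),\dots,\sigma(n))$ is a permutation of $(0,1,\dots,n)$ and hence has constant sum $\binom{n+1}{2}$, the leading term of the $\sigma$-summand $T_\sigma=f^{\sigma(0)}x_1f^{\sigma(1)}x_2\cdots x_nf^{\sigma(n)}$ will be
\[
\mathrm{LT}(T_\sigma)=c^{\binom{n+1}{2}}\,v_\sigma,\qquad v_\sigma:=u^{\sigma(0)}x_1u^{\sigma(1)}x_2\cdots x_nu^{\sigma(n)},
\]
so every summand carries the same leading coefficient up to the sign $\sign(\sigma)$.

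The crux is then to show that the group elements $v_\sigma$ are pairwise distinct as $\sigma$ ranges over $S_{n+1}$. Granting this, $\max_\sigma v_\sigma$ is attained at a unique $\sigma^\ast$, this element lies in no other $\supp(T_\sigma)$ (each of which satisfies $\max\supp(T_\sigma)=v_\sigma\prec v_{\sigma^\ast}$), and hence its coefficient in $g_n(f,x_1,\dots,x_n)$ is exactly $\sign(\sigma^\ast)\,c^{\binom{n+1}{2}}\neq 0$, which proves the statement. Distinctness I would extract from the free-product decomposition $G=\langle x_1,\dots,x_n\rangle\ast\langle y_1,\dots,y_m\rangle$: since $u\neq e$ lies in the second factor while the separators $x_1,\dots,x_n$ lie in the first, the normal form of $v_\sigma$ has the $x$-syllables $x_1,\dots,x_n$ in fixed positions and the $y$-syllables $u^{\sigma(i)}$ (those with $\sigma(i)\neq0$) interposed between them; as $u$ has infinite order, reading off these syllables recovers the tuple $(\sigma(0),\dots,\sigma(n))$, and therefore $\sigma$. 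Thus $\sigma\mapsto v_\sigma$ is injective.

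I expect the main obstacle to be exactly the control of cancellation among the $(n+1)!$ signed monomials, and the argument is engineered so that precisely one monomial survives at the top of the order. The two facts that make this work are the multiplicativity of the leading term under a two-sided-invariant order and the injectivity of $\sigma\mapsto v_\sigma$, the latter resting on $u\neq e$. It is worth flagging that the non-constant (non-central) hypothesis is genuinely needed, since in the scalar case $g_n(f,\dots)\equiv0$; in every application in this paper $f$ is either a non-central multilinear polynomial or a non-trivial group monomial, so it is automatically non-constant and the argument applies.
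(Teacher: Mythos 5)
Your proof is correct, but it takes a genuinely different route from the paper's. The paper argues indirectly through evaluations: it embeds $D$ into a division ring $D_1$ with the same center but $\dim_F D_1=\infty$ (citing Lam, (14.21)), invokes a theorem of Hai--Dung--Bien on generalized algebraic rational identities to produce $a_1,\dots,a_m\in D_1$ at which $f(a_1,\dots,a_m)$ fails to be algebraic of degree at most $n$, and then uses the Beidar--Martindale--Mikhalev characterization (Lemma~\ref{le2.2}) to obtain a nonzero evaluation of $g_n(f,\dots)$, whence the Laurent polynomial is nonzero. Your argument is intrinsic and never evaluates anything: you bi-order the free group, use multiplicativity of leading terms (the classical mechanism showing $F[G]$ is a domain for bi-orderable $G$), and isolate a single surviving monomial $v_{\sigma^*}$; the injectivity of $\sigma\mapsto v_\sigma$ via the free-product normal form is sound once $u\neq e$, and your device of reversing the order to force $u\neq e$ works because the reverse of a bi-invariant order is bi-invariant. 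What your approach buys is independence from two heavy external inputs (the embedding theorem and the rational-identity theorem) and a sharper diagnosis of the hypothesis: you correctly note that for $f$ a nonzero \emph{constant} the conclusion is actually false, since $g_n(c,x_1,\dots,x_n)=c^{\binom{n+1}{2}}\bigl(\sum_{\sigma}\sign(\sigma)\bigr)x_1\cdots x_n=0$ for $n\ge 1$; so the lemma as stated with merely ``nonzero'' $f$ tacitly requires $f$ non-constant, a hypothesis the paper's proof also needs (its cited theorem cannot apply to constants) and which holds in every application here. The paper's route, by contrast, buys brevity and reuses machinery already deployed elsewhere in the paper, at the cost of leaning on substantially stronger results.
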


\begin{proof} 
    As a corollary of \cite[(14.21)]{Lam.2001.book}, there is a division ring $D_1$ with center $F$ such that $D$ is a subring of $D_1$ and $\mathrm{dim}_F D_1$ is infinite. If for all $a_1,a_2,\ldots,a_m$ in $D_1$, $f(a_1,a_2,\ldots,a_m)$ is algebraic of degree at most $n$ over $F$, then
    \cite[Theorem 1.2]{Pa_Hai.Dung.Bien_2022} follows that $D_1$ is finite-dimensional over $F$. That is a contradiction. Thus,there exist $a_1,a_2,\ldots,a_m$ in $D_1$ such that $f(a_1,a_2,\ldots,a_m)$ is either not algebraic or algebraic of degree greater than $n$ over $F$. By Lemma \ref{le2.2},
    \[
    g_n(f(a_1,a_2,\ldots,a_m),r_1,r_2,\ldots,r_n)\neq 0
    \]
    for some $r_1,r_2,\ldots,r_n\in D_1$. Consequently, $g_n(f(y_1, \dots, y_m), x_1, \dots, x_n)$ is nonzero.
\end{proof}

\section{Some matrices in the image of a polynomial}

The goal of this section is to establish the existence of matrices in the image of a given polynomial, under the condition that such matrices have a prescribed size and are algebraic. This result plays a crucial role in the overall structure of the paper.

Recall that the trace of a matrix $A\in \mathrm{M}_n(F)$, denoted by $\mathrm{trace}(A)$, is the sum of its diagonal entries.
Next, we state an essential result concerning multilinear polynomials on $2\times2$ matrices, due to Kanel-Belov, Malev, and Rowen:

\begin{lemma}\label{caseOfThm2_Kanel-Belov.Malev.Rowen.2012paper}
Let $F$ be an algebraically closed field, and $f(x_1, \ldots, x_n)$ a multilinear polynomial in $\mathrm{M}_2(F)$. If $f$ is non-central over $\mathrm{M}_2(F)$, then for every matrix $A \in \mathrm{M}_2(F)$ with $\mathrm{trace}(A) = 0$, there exist matrices $T_1, \ldots, T_n \in \mathrm{M}_2(F)$ such that
$f(T_1, \ldots, T_n) = A.$
\end{lemma}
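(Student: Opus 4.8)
The plan is to prove the equivalent statement $\mathrm{sl}_2(F) \subseteq \mathrm{Im}(f)$, where $\mathrm{sl}_2(F)$ denotes the trace-zero matrices and $\mathrm{Im}(f) = \{f(T_1,\dots,T_n) : T_i \in \mathrm{M}_2(F)\}$; the asserted conclusion is literally this inclusion. First I would record the structural features of the image that do all the work. Because conjugation by any $P \in \mathrm{GL}_2(F)$ is an $F$-algebra automorphism, $P\,\mathrm{Im}(f)\,P^{-1} = \mathrm{Im}(f)$, so the image is stable under the adjoint action. Because $f$ is multilinear, $f(\lambda T_1, T_2,\dots,T_n) = \lambda f(T_1,\dots,T_n)$, so $\mathrm{Im}(f)$ is a cone ($\lambda\,\mathrm{Im}(f) = \mathrm{Im}(f)$ for $\lambda \in F$, and $0 \in \mathrm{Im}(f)$). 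Finally, freezing all but one argument, the map $X \mapsto f(X, T_2,\dots,T_n)$ is $F$-linear, so its image is an honest linear subspace of $\mathrm{M}_2(F)$ sitting inside $\mathrm{Im}(f)$; thus $\mathrm{Im}(f)$ contains linear subspaces, a fact much stronger than mere homogeneity.

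Next I would extract a nonzero trace-zero value. Since $f$ is non-central there are parameters for which some linear slice $W = \{f(X,T_2,\dots,T_n):X \in \mathrm{M}_2(F)\}$ contains a non-scalar matrix. The organizing dichotomy is the trace polynomial $\tau(T_1,\dots,T_n) = \mathrm{trace}\,f(T_1,\dots,T_n)$, which is itself multilinear. If $\tau \equiv 0$, every value of $f$ already has trace zero, so any non-scalar value is the desired nonzero element of $\mathrm{sl}_2(F) \cap \mathrm{Im}(f)$. If $\tau \not\equiv 0$, I would use the subspace $W$ together with its conjugates, all contained in $\mathrm{Im}(f)$, to cancel the scalar component and land a genuine trace-zero matrix. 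I expect this scalar-killing step to be the crux: a single non-scalar value $B$ decomposes as $B = \tfrac{1}{2}\mathrm{trace}(B)\,I + B_0$ with $B_0 \neq 0$, but $\mathrm{Im}(f)$ is not a priori closed under subtraction, so one cannot simply subtract off the scalar part, and the genuinely two-dimensional geometry of the slice $W$ inside $\mathrm{M}_2(F) = F\cdot I \oplus \mathrm{sl}_2(F)$ must be exploited.

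Having secured a nonzero $a_0 \in \mathrm{sl}_2(F) \cap \mathrm{Im}(f)$, I would fill out all of $\mathrm{sl}_2(F)$ geometrically. Identify $\mathrm{sl}_2(F)$ with a three-dimensional quadratic space via the form $q(a) = \det(a)$, which over the algebraically closed $F$ is nondegenerate and isotropic. The conjugation orbit of $a_0$ is exactly the level set $\{a \in \mathrm{sl}_2(F) : \det a = \det a_0\}$ when $\det a_0 \neq 0$ (distinct eigenvalues $\pm\sqrt{-\det a_0} \in F$, hence a single orbit), and the punctured nilpotent cone when $\det a_0 = 0$. Scaling sends $a_0 \mapsto \lambda a_0$ with $\det(\lambda a_0) = \lambda^2 \det a_0$, and since every element of $F$ is a square, $\lambda^2$ ranges over all of $F$; thus conjugation together with scaling sweeps out every level set $\{\det = c\}$ with $c \neq 0$. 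To capture the remaining nilpotent cone $\{\det = 0\}$ I would use that a suitable slice $W$ meets $\mathrm{sl}_2(F)$ in a subspace of dimension at least two and that a quadratic form in two or more variables over an algebraically closed field always has a nonzero isotropic vector, producing a nonzero nilpotent value; its orbit and scalings then give all of $\{\det = 0\}$. Combining the cases yields $\mathrm{sl}_2(F) \subseteq \mathrm{Im}(f)$, which is the claim.

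The main obstacle, as indicated, is twofold and specific to the low-dimensional situation: first, promoting a merely non-scalar value to an honest trace-zero value (killing the central component without the luxury of subtraction in $\mathrm{Im}(f)$), and second, ensuring that both a semisimple and a nilpotent trace-zero value are realized so that every determinant level set, including the degenerate one, is hit. Both points lean essentially on the decomposition $\mathrm{M}_2(F) = F\cdot I \oplus \mathrm{sl}_2(F)$ and on the isotropy of $\det$; note that the hypothesis that $F$ is algebraically (hence quadratically) closed enters precisely in the scaling step, since otherwise $\lambda^2$ would not exhaust $F$ and the level sets $\{\det = c\}$ could not all be reached. This is the content established by Kanel-Belov, Malev, and Rowen, whose classification of the possible images ($\{0\}$, the scalars, $\mathrm{sl}_2(F)$, or all of $\mathrm{M}_2(F)$) shows that a non-central $f$ has image containing every trace-zero matrix.
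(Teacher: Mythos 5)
Your closing sentence --- that the claim follows from the Kanel-Belov--Malev--Rowen classification of images of multilinear polynomials on $\mathrm{M}_2(F)$ (image equal to $\{0\}$, the scalars, $\mathrm{sl}_2(F)$, or all of $\mathrm{M}_2(F)$) --- is exactly the paper's proof: the paper simply invokes \cite[Theorem 2]{Kanel-Belov.Malev.Rowen.2012paper} and observes that non-centrality rules out the first two cases. If you intend the citation as the proof, you agree with the paper and are done.

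The self-contained argument you sketch around that citation, however, has genuine gaps at precisely the points you yourself flag as the crux. First, the scalar-killing step: when the trace polynomial $\tau = \mathrm{trace}\circ f$ is not identically zero, you propose to ``use the subspace $W$ together with its conjugates to cancel the scalar component,'' but no mechanism is given --- $\mathrm{Im}(f)$ is not closed under addition or subtraction, and conjugation preserves the scalar part of a matrix, so conjugates of a fixed non-scalar value all have the same nonzero trace; producing a trace-zero non-scalar value from these data is the hard content of the KBMR theorem and is not supplied here. Second, the assertion that ``a suitable slice $W$ meets $\mathrm{sl}_2(F)$ in a subspace of dimension at least two'' is unsupported: the linear map $X\mapsto f(X,T_2,\dots,T_n)$ can perfectly well have a one-dimensional image, in which case $W\cap\mathrm{sl}_2(F)$ is at most a line and your isotropic-vector argument (needed to reach the nilpotent cone, and also to guarantee a value with $\det\neq 0$ for the semisimple level sets) does not get off the ground. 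Third, your decomposition $B=\tfrac12\mathrm{trace}(B)\,I+B_0$ and the ``distinct eigenvalues $\pm\sqrt{-\det a_0}$'' step both fail in characteristic $2$, which the lemma does not exclude (indeed the paper's Lemma \ref{Lem.Pm} treats $\mathrm{char}(F)=2$ separately). As written, the direct argument establishes the conclusion only in the special case $\tau\equiv 0$, $\mathrm{char}(F)\neq 2$, and even there only once a two-dimensional trace-zero slice is actually exhibited.
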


\begin{proof}
According to \cite[Theorem 2]{Kanel-Belov.Malev.Rowen.2012paper}, if $f(\mathrm{M}_2(F)) \not\subseteq \mathrm{Z}(\mathrm{M}_2(F)) \cong F$, then either $f(\mathrm{M}_2(F)) = \mathrm{M}_2(F)$ or $f(\mathrm{M}_2(F))$ is the set of trace-zero matrices. Thus, for any $A \in \mathrm{M}_2(F)$ with $\mathrm{trace}(A) = 0$, there exist matrices  $T_1, \ldots, T_n \in \mathrm{M}_2(F)$ such that $f(T_1, \ldots, T_n) = A$.
\end{proof}

Throughout the remainder of this section, for each integer $m \geq 2$, we fix two matrices $P_m$ and $Q_m$ over an infinite field $F$ as follows. Let $s$ be the integer part of $m/2$. Choose elements $a_1, \ldots, a_s, b_1, \ldots, b_s \in F \setminus \{0,1\}$ such that the elements $\pm a_1, \ldots, \pm a_s$ are pairwise distinct, and the elements $b_1, b_1^{-1}, \ldots, b_s, b_s^{-1}$ are also pairwise distinct.

For each $i = 1, \ldots, s$, define the following $2 \times 2$ matrices
\[
A_i = \begin{pmatrix} a_i & 1 \\ 0 & -a_i \end{pmatrix}, \text{ and } 
B_i = \begin{pmatrix} b_i & 0 \\ 0 & b_i^{-1} \end{pmatrix}.
\]

We then define
\[
P_m =
\begin{cases}
\mathrm{diag}(A_1, \ldots, A_s), & \text{if } m = 2s, \\
\mathrm{diag}(A_1, \ldots, A_s, 0), & \text{if } m = 2s+1,
\end{cases}\]
and
\[
Q_m =
\begin{cases}
\mathrm{diag}(B_1, \ldots, B_s), & \text{if } m = 2s, \\
\mathrm{diag}(B_1, \ldots, B_s, 1), & \text{if } m = 2s+1,
\end{cases}
\]
where $\mathrm{diag}(X_1, \ldots, X_n)$ represents a block diagonal matrix with the square matrices $X_1,\ldots, X_n$ along the diagonal of a larger matrix, all other elements being zeros.

We now develop several auxiliary lemmas to analyze the properties of the matrices $P_m$ and $Q_m$. The following lemma guarantees that $P_m$ can be obtained as the value of $f$ and also establishes its algebraicity.

\begin{lemma}\label{Lem.Pm}
Let $F$ be an algebraically closed field, and $f(x_1, \ldots, x_n)$ a non-central multilinear polynomial over $\mathrm{M}_2(F)$. For the matrix $P_m$ defined above, the following assertions hold:
\begin{enumerate}
\item[{\rm (1)}] There exist matrices $T_1, \ldots, T_n \in \mathrm{M}_m(F)$ such that $f(T_1, \ldots, T_n) = P_m$.
\item[{\rm(2)}] $P_m$ is an algebraic matrix of degree $m$ over $F$.
\end{enumerate}
\end{lemma}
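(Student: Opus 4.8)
The plan is to prove the two assertions separately, in both cases exploiting the block-diagonal shape of $P_m$.

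For \textrm{(1)} I would first record the elementary but decisive observation that a multilinear polynomial respects a fixed block partition: if matrices $T_1,\dots,T_n$ all carry the common block-diagonal form $T_j=\mathrm{diag}\bigl(T_j^{(1)},\dots,T_j^{(k)}\bigr)$, then block-diagonal matrices of a fixed shape form a subalgebra on which products are computed block-wise, so $f(T_1,\dots,T_n)=\mathrm{diag}\bigl(f(T_\bullet^{(1)}),\dots,f(T_\bullet^{(k)})\bigr)$, where $T_\bullet^{(l)}$ denotes the tuple of $l$-th blocks. Thus, to realize $P_m=\mathrm{diag}(A_1,\dots,A_s)$ (or $\mathrm{diag}(A_1,\dots,A_s,0)$ when $m$ is odd) as a value of $f$ on $\mathrm{M}_m(F)$, it suffices to hit each diagonal block inside its own $\mathrm{M}_2(F)$ (respectively $\mathrm{M}_1(F)$) factor. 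Each $A_i$ has $\mathrm{trace}(A_i)=a_i+(-a_i)=0$, so Lemma~\ref{caseOfThm2_Kanel-Belov.Malev.Rowen.2012paper} provides $2\times2$ matrices $T_1^{(i)},\dots,T_n^{(i)}$ with $f(T_\bullet^{(i)})=A_i$. For the trailing $1\times1$ zero block in the odd case I would simply take all corresponding blocks equal to $0$; since $f$ is multilinear with no constant term, it returns $0$ there. Assembling these blocks produces the required $T_1,\dots,T_n\in\mathrm{M}_m(F)$ with $f(T_1,\dots,T_n)=P_m$.

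For \textrm{(2)} I would compute the spectrum of $P_m$ directly. Each $A_i$ is upper triangular with diagonal entries $a_i,-a_i$, hence has characteristic polynomial $(x-a_i)(x+a_i)$, while the $1\times1$ block contributes the eigenvalue $0$. Therefore the eigenvalues of $P_m$ are exactly $\pm a_1,\dots,\pm a_s$, together with $0$ when $m$ is odd. The standing hypotheses make these pairwise distinct: the $\pm a_i$ are distinct by assumption, and since each $a_i\ne0$ none of them equals the extra eigenvalue $0$ in the odd case. Hence $P_m$ has $m$ distinct eigenvalues in $F$, so it is diagonalizable and its minimal polynomial is the product of the distinct linear factors $(x-\lambda)$, a product of $m$ distinct terms. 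Consequently the minimal polynomial has degree $m$, which is to say $P_m$ is algebraic over $F$ of degree exactly $m$.

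I do not anticipate a genuine obstacle here: assertion \textrm{(1)} rests entirely on the Kanel-Belov--Malev--Rowen description of the image of a non-central multilinear polynomial on $\mathrm{M}_2(F)$ combined with the block-diagonal compatibility of $f$, and \textrm{(2)} is a direct eigenvalue count. The only points demanding care are verifying that the block-diagonal identity for $f$ holds verbatim (immediate once one notes that $f$ acts coordinate-wise on the subalgebra of matrices sharing a fixed block shape) and confirming distinctness of the \emph{full} eigenvalue list, where the condition $a_i\ne0$ is exactly what separates $0$ from the $\pm a_i$ and guarantees the degree equals $m$ rather than something smaller.
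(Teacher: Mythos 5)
Your part (1) is correct and is essentially the paper's argument: hit each $2\times 2$ block $A_i$ via Lemma~\ref{caseOfThm2_Kanel-Belov.Malev.Rowen.2012paper} (trace zero), use the block-wise action of $f$ on the subalgebra of matrices with a fixed block-diagonal shape, and put $0$ in the trailing $1\times 1$ block, which works because every monomial of a multilinear polynomial contains every variable.

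Part (2), however, has a genuine gap: your eigenvalue count silently assumes $\operatorname{char}(F)\neq 2$. In characteristic $2$ we have $a_i=-a_i$, so each $A_i=\begin{pmatrix} a_i & 1\\ 0 & -a_i\end{pmatrix}$ has the single repeated eigenvalue $a_i$, the full list $\pm a_1,\dots,\pm a_s$ cannot consist of $2s$ distinct elements, $P_m$ is \emph{not} diagonalizable, and ``the product of the distinct linear factors'' has degree only $s$ (or $s+1$), not $m$. The paper treats this as a separate case: since $A_i+a_iI=\begin{pmatrix}0&1\\0&0\end{pmatrix}\neq 0$, the minimal polynomial of $A_i$ is $(x+a_i)^2$ (this is exactly why the off-diagonal entry $1$ is built into $A_i$), and hence the minimal polynomial of the block-diagonal matrix $P_m$ is $x^{\delta_{m,2s+1}}\prod_{i=1}^{s}(x+a_i)^2$, of degree $m$, provided the $a_i$ are nonzero and pairwise distinct. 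Your argument is complete only for $\operatorname{char}(F)\neq 2$; to close the gap you must add this characteristic-$2$ case (and correspondingly read the standing distinctness hypothesis on $\pm a_1,\dots,\pm a_s$ as $a_i\neq\pm a_j$ for $i\neq j$, which in characteristic $2$ reduces to the $a_i$ being pairwise distinct).
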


\begin{proof}
(1) For each $i$, we have $\mathrm{trace}(A_i) = 0$. Since $f$ is non-central, Lemma~\ref{caseOfThm2_Kanel-Belov.Malev.Rowen.2012paper} implies that there exist matrices $T_{i1}, \ldots, T_{in} \in \mathrm{M}_2(F)$ such that
\[
f(T_{i1}, \ldots, T_{in}) = A_i.
\]
For each $j = 1, \ldots, n$, define:
\[
T_j =
\begin{cases}
\mathrm{diag}(T_{1j}, \ldots, T_{sj}), & \text{if } m = 2s, \\
\mathrm{diag}(T_{1j}, \ldots, T_{sj}, 0), & \text{if } m = 2s+1.
\end{cases}
\]
Then we obtain $f(T_1, \ldots, T_n) = P_m$.

(2) Consider the characteristic polynomial $\chi_{P_m}(x)$ of $P_m$.

\emph{Case 1.} If $\mathrm{char}(F) \neq 2$, then
\[
\chi_{P_m}(x) = x^{\delta_{m,2s+1}} \prod_{i=1}^s (x - a_i)(x + a_i).
\]
Here $\delta_{m,2s+1}$ is the Kronecker delta. Since the roots are pairwise distinct, this is also the minimal polynomial of $P_m$, and hence $P_m$ is algebraic of degree $m$.

\emph{Case 2.} If $\mathrm{char}(F) = 2$, then $\chi_{A_i}(x) = (x + a_i)^2$ and $A_i + a_iI \neq 0$, so the minimal polynomial of $A_i$ is $(x + a_i)^2$. Thus,
\[
\chi_{P_m}(x) = x^{\delta_{m,2s+1}} \prod_{i=1}^s (x + a_i)^2,
\]
and therefore $P_m$ is also algebraic of degree $m$.
\end{proof}

We also need a result on the image of word maps in the group $\mathrm{SL}_2(F)$ in \cite[Theorem 1.4]{Bien.Ramezan-Nassab.Trung.2024paper}.

\begin{lemma}[\protect{\cite[Theorem 1.4]{Bien.Ramezan-Nassab.Trung.2024paper}}]\label{thm1.4_Bien.Ramezan-Nassab.Trung.2024paper}
Let $F$ be an algebraically closed field, and $w(x_1, \ldots, x_n)$ a non-trivial word of non-commuting indeterminants $x_1,\ldots, x_n$. Then, the image of the corresponding map $w: \prod\limits_n \mathrm{SL}_2(F) \to \mathrm{SL}_2(F)$ contains all matrices in $\mathrm{SL}_2(F)$ whose trace is different from $\pm 2$.
\end{lemma}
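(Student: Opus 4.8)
The plan is to reduce the statement to a surjectivity property of a single trace polynomial and then invoke the Nullstellensatz. First I would exploit the fact that the word map $w\colon \prod_n \SL_2(F) \to \SL_2(F)$ is equivariant for simultaneous conjugation, since $w(g x_1 g^{-1},\dots,g x_n g^{-1}) = g\, w(x_1,\dots,x_n)\, g^{-1}$; hence its image is a union of conjugacy classes. A matrix $M \in \SL_2(F)$ with $\mathrm{tr}(M)=t \neq \pm 2$ has characteristic polynomial $x^2 - tx + 1$ with two distinct roots $\lambda,\lambda^{-1}$, so it is diagonalizable and its conjugacy class is determined by $t$ alone; thus for each such $t$ the fibre $\mathrm{tr}^{-1}(t)$ is a single class. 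Consequently it suffices to prove that the composite $\tau = \mathrm{tr}\circ w$ takes every value $t \in F\setminus\{\pm 2\}$, because hitting one matrix of trace $t$ then forces the whole class $\mathrm{tr}^{-1}(t)$ into the image.

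Next I would reduce to the two-variable case. Fixing an embedding of free groups $F_n \hookrightarrow F_2 = \langle a,b\rangle$ (for instance $x_i \mapsto a^i b a^{-i}$), the word $w$ is carried to a nontrivial element $w'(a,b)$, and substituting matrices for $a,b$ realizes $\{\,w'(P,Q) : P,Q \in \SL_2(F)\,\}$ as a subset of the image of $w$. It therefore suffices to treat a nontrivial two-variable word $w'$, where the classical Fricke--Vogt theory applies: using $A^{-1} = \mathrm{tr}(A)I - A$ in $\SL_2$, the trace $\mathrm{tr}(w'(A,B))$ is a polynomial $p(\alpha,\beta,\gamma) \in \Z[\alpha,\beta,\gamma]$ in the coordinates $\alpha = \mathrm{tr}(A)$, $\beta = \mathrm{tr}(B)$, $\gamma = \mathrm{tr}(AB)$, and the coordinate map $(A,B)\mapsto(\alpha,\beta,\gamma)$ is surjective onto $\A^3$ (which one checks directly by solving for the entries of $B$ against a companion matrix $A$). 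If $p$ is nonconstant, then over the algebraically closed field $F$ the weak Nullstellensatz forces $p$ to be surjective onto $\A^1$ — a missed value $c$ would make $p-c$ a nowhere-vanishing nonconstant polynomial — and composing with the surjective coordinate map shows that $\tau$ attains every value in $F$, in particular every $t \neq \pm 2$, finishing the argument.

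The crux is therefore to establish that $p$ is nonconstant, equivalently that $\tau$ is not identically a scalar, and this is where I expect the real work to lie: it is exactly the assertion that the word map $w'\colon \SL_2(F)^2 \to \SL_2(F)$ is dominant, which for a nontrivial word follows from Borel's theorem on the dominance of word maps on semisimple algebraic groups. Since $\mathrm{tr}$ is nonconstant on $\SL_2$, dominance of $w'$ yields nonconstancy of $p$ at once. Everything else is either elementary $\SL_2$ linear algebra (the single-class description of the fibres $\mathrm{tr}^{-1}(t)$ for $t \neq \pm 2$ and the surjectivity of the Fricke coordinates) or the Nullstellensatz, so Borel's dominance theorem is the one substantial external input; an alternative that avoids Borel would be to verify nonconstancy by hand after splitting into the case where some generator has nonzero exponent sum in $w'$ (where restricting the remaining generators to the identity reduces $\tau$ to the surjective trace of a power map) and the harder case $w' \in [F_2,F_2]$.
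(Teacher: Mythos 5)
Your proposal is correct in outline, but note that the paper does not actually prove this lemma at all: it is quoted verbatim as \cite[Theorem 1.4]{Bien.Ramezan-Nassab.Trung.2024paper} and used as a black box, so any comparison is really with that external reference rather than with an argument in this paper. Your route is the classical Bandman--Zarhin-style argument: conjugation-equivariance reduces the problem to surjectivity of the trace of the word map onto $F\setminus\{\pm2\}$ (using that for $t\neq\pm2$ the fibre $\mathrm{tr}^{-1}(t)\cap \SL_2(F)$ is a single conjugacy class, which indeed survives in characteristic $2$ since the discriminant $t^2-4=t^2$ is nonzero exactly when $t\neq 0=\pm2$); the embedding $x_i\mapsto a^iba^{-i}$ legitimately reduces to a nontrivial two-variable word with image contained in that of $w$; the Fricke--Vogt trace polynomial $p(\alpha,\beta,\gamma)$ together with surjectivity of $(A,B)\mapsto(\mathrm{tr}A,\mathrm{tr}B,\mathrm{tr}AB)$ onto $\A^3$ and the weak Nullstellensatz then gives surjectivity of $\mathrm{tr}\circ w'$ once $p$ is nonconstant; and nonconstancy follows from Borel's dominance theorem since $\mathrm{tr}$ is nonconstant on the irreducible variety $\SL_2$. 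All of these steps check out over an arbitrary algebraically closed field, including positive characteristic, where both Borel's theorem and the integral Fricke identities apply. What your approach buys is an actual proof sketch with one substantial external input (Borel) in place of the paper's bare citation; what it costs is that the hard case $w'\in[F_2,F_2]$ is entirely delegated to Borel, whereas your proposed Borel-free alternative is left incomplete precisely there.
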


Finally, for the block-diagonal matrix $Q_m$, we have the following property.

\begin{lemma}\label{Lem.Qm}
Let $F$ be an algebraically closed field, and $w(x_1, \ldots, x_n)$ a non-trivial word. For the matrix $Q_m$ defined above, the following assertions hold.
\begin{enumerate}
\item[{\rm (1)}] There exist matrices $S_1, \ldots, S_n \in \mathrm{SL}_m(F)$ such that $w(S_1, \ldots, S_n) = Q_m$.
\item[{\rm(2)}] $Q_m$ is an algebraic matrix of degree $m$ over $F$.
\end{enumerate}
\begin{proof}
(1) For each $i$, since $B_i \in \mathrm{SL}_2(F)$ and $\mathrm{trace}(B_i) \neq \pm 2$, Lemma~\ref{thm1.4_Bien.Ramezan-Nassab.Trung.2024paper} implies that there exist matrices $S_{i1}, \ldots, S_{in} \in \mathrm{SL}_2(F)$ such that
\[
w(S_{i1}, \ldots, S_{in}) = B_i.
\]
For each $j = 1, \ldots, n$, define:
\[
S_j =
\begin{cases}
\mathrm{diag}(S_{1j}, \ldots, S_{sj}), & \text{if } m = 2s, \\
\mathrm{diag}(S_{1j}, \ldots, S_{sj}, 1), & \text{if } m = 2s+1.
\end{cases}
\]
Then $w(S_1, \ldots, S_n) = Q_m$.

(2) The characteristic polynomial $\chi_{Q_m}(x)$ of $Q_m$ is given by:
\[
\chi_{Q_m}(x) = (x - 1)^{\delta_{m,2s+1}} \prod_{i=1}^s (x - b_i)(x - b_i^{-1}).
\]
Since the roots of $\chi_{Q_m}(x)$ are pairwise distinct, this polynomial is also the minimal polynomial of $Q_m$, and thus $Q_m$ is algebraic of degree $m$.
\end{proof}
\end{lemma}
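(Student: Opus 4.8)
The plan is to prove Lemma~\ref{Lem.Qm} in close analogy with Lemma~\ref{Lem.Pm}, exploiting the block-diagonal construction of $Q_m$. For part (1), the key observation is that each diagonal block $B_i$ lies in $\mathrm{SL}_2(F)$ and, by our choice of $b_i \in F \setminus \{0,1\}$, has trace $b_i + b_i^{-1} \neq \pm 2$ (since $\mathrm{trace}(B_i) = \pm 2$ would force $b_i = \pm 1$, contradicting $b_i \neq 1$ and the requirement $b_i \neq -1$ implicit in the distinctness of $b_i, b_i^{-1}$). Thus Lemma~\ref{thm1.4_Bien.Ramezan-Nassab.Trung.2024paper} applies to each block, yielding matrices $S_{i1}, \ldots, S_{in} \in \mathrm{SL}_2(F)$ with $w(S_{i1}, \ldots, S_{in}) = B_i$. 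I would then assemble the $S_j$ as block-diagonal matrices, mirroring the definition of $Q_m$, using the identity matrix as the final block in the odd case so that each $S_j$ lands in $\mathrm{SL}_m(F)$.

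The crucial point that makes the block-diagonal assembly work is that word maps respect block-diagonal structure: since $w$ is a group word (a product of the indeterminates and their inverses), evaluating $w$ on block-diagonal matrices produces a block-diagonal matrix whose blocks are the word evaluated on the corresponding sub-blocks. Concretely, $w(\mathrm{diag}(X_1, Y_1), \ldots, \mathrm{diag}(X_n, Y_n)) = \mathrm{diag}(w(X_1, \ldots, X_n), w(Y_1, \ldots, Y_n))$, and likewise for more blocks. In the odd case, the trailing $1 \times 1$ block is $1$, and $w$ evaluated at $1, \ldots, 1$ is $1$ since $w$ is a word, which matches the trailing entry of $Q_m$. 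Applying this to the $S_j$ defined above gives $w(S_1, \ldots, S_n) = \mathrm{diag}(B_1, \ldots, B_s)$ (or with the trailing $1$), which is exactly $Q_m$. I must also confirm that each assembled $S_j$ indeed has determinant $1$, which follows because the determinant of a block-diagonal matrix is the product of the determinants of its blocks, each of which is $1$.

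For part (2), I would compute the characteristic polynomial of $Q_m$ directly from its block-diagonal form: the characteristic polynomial of a block-diagonal matrix is the product of the characteristic polynomials of its blocks. Each $B_i$ has eigenvalues $b_i$ and $b_i^{-1}$, contributing the factor $(x - b_i)(x - b_i^{-1})$, and the odd case contributes an extra factor $(x-1)$ from the trailing entry. This yields $\chi_{Q_m}(x) = (x-1)^{\delta_{m,2s+1}}\prod_{i=1}^s (x - b_i)(x - b_i^{-1})$. The degree of $Q_m$ as an algebraic element equals the degree of its minimal polynomial, so it suffices to argue that the characteristic polynomial coincides with the minimal polynomial; this holds precisely when all eigenvalues are distinct, which is guaranteed by our hypothesis that $b_1, b_1^{-1}, \ldots, b_s, b_s^{-1}$ are pairwise distinct (and, in the odd case, distinct from $1$ as well). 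I expect no serious obstacle here: unlike Lemma~\ref{Lem.Pm}, there is no characteristic-two subtlety to handle, because the eigenvalues $b_i \neq b_i^{-1}$ keep each block diagonalizable regardless of the characteristic, so the single distinct-roots argument settles both the minimal polynomial and the degree at once.
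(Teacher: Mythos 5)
Your proposal is correct and follows essentially the same route as the paper: apply the surjectivity result for word maps on $\mathrm{SL}_2(F)$ to each block $B_i$ (using $\mathrm{trace}(B_i)\neq\pm 2$), assemble the witnesses block-diagonally, and read off the minimal polynomial from the distinct eigenvalues $b_i, b_i^{-1}$ (and $1$ in the odd case). Your added justifications — why $\mathrm{trace}(B_i)\neq\pm2$ follows from the choice of the $b_i$, and why word maps respect block-diagonal structure — are correct details the paper leaves implicit.
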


\section{Main Results}

In this section, we present two main results concerning the construction of maximal subfields in a finite-dimensional division ring, via non-central multilinear polynomials and non-trivial words and present an application.

\subsection{Simple maximal subfields}$\nonumber$

 We begin by recalling a key lemma that plays a significant role throughout the argument.

\begin{lemma}[\protect{\cite[Lemma 3.3]{Aaghabali.Bien.2019paper}}]\label{lem3.3_Aaghabali.Bien.2019paper}
Let $D$ be a division ring with center $F$, and $K$ a subfield of $D$ containing $F$. If $\mathrm{dim}_F(D)=m^2$, then $\mathrm{dim}_F(K)\leq m$. Equality holds if and only if $K$ is a maximal subfield of $D$.
\end{lemma}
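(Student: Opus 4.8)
The plan is to reduce everything to the Double Centralizer Theorem for central simple algebras, which applies here since $D$ is a central simple $F$-algebra with $\dim_F D = m^2$. Set $C = C_D(K)$, the centralizer of $K$ in $D$. Because $K$ is a commutative subfield containing $F$, every element of $K$ commutes with every other element of $K$, so $K \subseteq C$. The Double Centralizer Theorem (see, e.g., \cite[\S15]{Lam.2001.book}) then guarantees that $C$ is again a simple $F$-subalgebra and that
\[
\dim_F(K)\cdot \dim_F(C) = \dim_F(D) = m^2.
\]

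First I would derive the inequality. Since $K \subseteq C$ we have $\dim_F(K) \le \dim_F(C)$, and substituting this into the displayed identity gives $\dim_F(K)^2 \le \dim_F(K)\cdot\dim_F(C) = m^2$, whence $\dim_F(K) \le m$. The same identity shows that the equality $\dim_F(K) = m$ holds precisely when $\dim_F(K) = \dim_F(C)$; combined with the inclusion $K \subseteq C$ and the finiteness of the dimensions, this is in turn equivalent to $K = C$, that is, to $K = C_D(K)$.

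It then remains to prove that $K = C_D(K)$ is equivalent to $K$ being a maximal subfield. For the forward direction, suppose $K = C_D(K)$ and let $L$ be any subfield of $D$ with $K \subseteq L$. Since $L$ is commutative, every element of $L$ commutes with every element of $K$, so $L \subseteq C_D(K) = K$; hence $L = K$, and $K$ is maximal. For the converse I would argue by contraposition: suppose $K \subsetneq C_D(K)$ and choose $a \in C_D(K) \setminus K$. The subring $K[a]$ is commutative (as $a$ centralizes $K$ and commutes with itself), finite-dimensional over $F$, and a domain since it lies inside the division ring $D$. A finite-dimensional commutative domain over a field is itself a field, so $K[a]$ is a subfield of $D$ strictly containing $K$, and therefore $K$ is not maximal.

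The only substantive input is the Double Centralizer Theorem, and that is where the real work is hidden; the surrounding argument is elementary bookkeeping with dimensions together with the observation that commuting elements of a division ring generate a field. The hard part, or rather the one place demanding mild care, is the verification that $K[a]$ is genuinely a field rather than merely a domain—this is exactly the step in which finite-dimensionality over $F$ and the absence of zero divisors in $D$ are both used.
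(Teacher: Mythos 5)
Your proof is correct. Note that the paper itself does not prove this lemma at all --- it is quoted verbatim from \cite[Lemma 3.3]{Aaghabali.Bien.2019paper} --- so there is no internal argument to compare against; but your route via the Double Centralizer Theorem is the standard one (and is essentially how the cited source and \cite[\S 15]{Lam.2001.book} handle it). All the steps check out: the identity $\dim_F(K)\cdot\dim_F\bigl(C_D(K)\bigr)=m^2$ together with $K\subseteq C_D(K)$ gives the bound; the equivalence of $\dim_F(K)=m$ with $K=C_D(K)$ is forced by finite-dimensionality; and you correctly isolate the one point needing care, namely that for $a\in C_D(K)\setminus K$ the commutative finite-dimensional domain $K[a]$ is actually a field, which is what upgrades ``$K\subsetneq C_D(K)$'' to ``$K$ is not maximal.''
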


This result shows that in a finite-dimensional division ring $D$ over a field $F$, maximal subfields have dimension exactly equal to the square root of the dimension of $D$ over $F$. Therefore, finding an element $a \in D$ such that $F(a)$ has dimension $m$ yields a maximal subfield of $D$.

Next, we prove that the values obtained from a non-central multilinear polynomial can generate a maximal subfield, provided appropriate elements are substituted into the indeterminates.

\begin{theorem}\label{MaxSbFi_poly}
Suppose $D$ is a finite-dimensional division ring over its center $F$. Let $f(x_1,\ldots,x_n)\in F[x_1,\ldots,x_n]$ be a multilinear polynomial such that $f$ is non-central over the matrix ring $\mathrm{M}_2(F)$. Then, there exist elements $c_1,\ldots,c_n\in D$ such that the field $F(f(c_1,\ldots,c_n))$ is a maximal subfield of $D$.
\end{theorem}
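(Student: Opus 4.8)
The plan is to reduce the statement to a question about the degree of algebraicity and then resolve it with the polynomial-identity machinery of Sections~2 and~3. Write $\dim_F D = m^2$. A finite division ring is commutative, so if $F$ is finite then $D = F$, $m = 1$, and any substitution works; hence I may assume $F$ is infinite and $m \ge 2$. For $a \in D$ the subring $F[a]$ is a finite-dimensional commutative domain, hence a field equal to $F(a)$, and $\dim_F F(a)$ is exactly the degree of $a$ over $F$. By Lemma~\ref{lem3.3_Aaghabali.Bien.2019paper} every element of $D$ has degree at most $m$, and $F(a)$ is maximal precisely when this degree equals $m$. Thus it suffices to exhibit $c_1,\dots,c_n \in D$ for which $a = f(c_1,\dots,c_n)$ has degree strictly greater than $m-1$; the upper bound $m$ is then automatic.

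To detect the degree I would use the expression
\[
h(x_1,\dots,x_n,y_1,\dots,y_{m-1}) = g_{m-1}\big(f(x_1,\dots,x_n),\, y_1,\dots,y_{m-1}\big),
\]
which is an ordinary (non-Laurent) polynomial since $f$ is. I argue by contradiction: suppose $f(c_1,\dots,c_n)$ is algebraic of degree at most $m-1$ for \emph{all} $c_1,\dots,c_n \in D$. Then Lemma~\ref{le2.2}, applied in $D$ itself, forces $g_{m-1}(f(c_1,\dots,c_n), d_1,\dots,d_{m-1}) = 0$ for all $c_i, d_j \in D$; in particular $h$ vanishes on every tuple of units, so $h \in \mathcal{I}(D)$.

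Next I would produce a concrete obstruction over the algebraic closure $\overline{F}$. Non-centrality of $f$ persists from $\mathrm{M}_2(F)$ to $\mathrm{M}_2(\overline{F})$, because a matrix with entries in $F$ that is not a scalar over $F$ cannot become a scalar over $\overline{F}$. Hence Lemma~\ref{Lem.Pm} supplies $T_1,\dots,T_n \in \mathrm{M}_m(\overline{F})$ with $f(T_1,\dots,T_n) = P_m$, where $P_m$ is algebraic of degree exactly $m$ over $\overline{F}$. Since $P_m$ is \emph{not} of degree at most $m-1$, Lemma~\ref{le2.2} (now in $\mathrm{M}_m(\overline{F})$) yields $R_1,\dots,R_{m-1}$ with $g_{m-1}(P_m, R_1,\dots,R_{m-1}) \neq 0$; that is, $h$ does not vanish at the tuple $(T_1,\dots,T_n,R_1,\dots,R_{m-1})$.

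The crux is the passage from $\overline{F}$ back to $D$, carried by the coincidence of identities. Since $F$ is infinite and $\dim_F D = m^2$, Lemma~\ref{l2.2} gives $\mathcal{I}(D) = \mathcal{I}(\mathrm{M}_m(\overline{F}))$, so the conclusion of the second paragraph would place $h$ in $\mathcal{I}(\mathrm{M}_m(\overline{F}))$, i.e.\ $h$ vanishes on every tuple of invertible matrices. As $h$ is a genuine polynomial and $\overline{F}$ is infinite, $\mathrm{GL}_m(\overline{F})$ is Zariski-dense in $\mathrm{M}_m(\overline{F})$, so $h$ would then vanish on all of $\mathrm{M}_m(\overline{F})$---contradicting the third paragraph. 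Therefore some $a = f(c_1,\dots,c_n)$ has degree $m$, and $F(a)$ is a maximal subfield by Lemma~\ref{lem3.3_Aaghabali.Bien.2019paper}. I expect this last transport step to be the main obstacle: the only place an explicit degree-$m$ value of $f$ is visible is over $\overline{F}$ (via $P_m$), and it is precisely the Amitsur-type equality of identity sets, together with the density remark reconciling ``all matrices'' with ``invertible matrices'', that makes this witness usable inside $D$.
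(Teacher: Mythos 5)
Your proof is correct and follows essentially the same route as the paper's: bound the maximal algebraic degree of values of $f$ using the polynomials $g_k$, transfer the resulting identity from $D$ to $\mathrm{M}_m(\overline{F})$ via Amitsur's theorem (Lemma~\ref{l2.2}), and use the witness $P_m$ of Lemma~\ref{Lem.Pm} to force that degree up to $m$, whence maximality by Lemma~\ref{lem3.3_Aaghabali.Bien.2019paper}. Your version is in fact slightly more careful than the paper's, since you explicitly dispose of the finite-center case and supply the Zariski-density argument needed to pass from vanishing on $\mathrm{GL}_m(\overline{F})$-tuples to vanishing at the (possibly singular) matrices $T_1,\dots,T_n$.
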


\begin{proof}
Assume $\mathrm{dim}_F(D)=m^2$. Consider
\[
\ell=\max\left\{\mathrm{dim}_F(F(f(c_1,\ldots,c_n))) \,\middle|\, c_1,\ldots,c_n\in D\right\}.
\]
By Lemma~\ref{le2.2},
\[
g_\ell(f(c_1,\ldots,c_n), r_1,\ldots, r_\ell)=0
\]
for all $r_1,\ldots,r_\ell\in D$. Hence, $D$ satisfies the rational identity:
\[
g_\ell(f(x_1,\ldots,x_n), y_1,\ldots,y_\ell) = 0.
\]
By Lemma~\ref{l2.3}, this Laurent polynomial is nonzero, and thus, by Lemma~\ref{le2.2}, it is also an identity of the ring $\mathrm{M}_m(\overline{F})$, where $\overline{F}$ is the algebraic closure of $F$.  Therefore,
\[
g_\ell(f(X_1,\ldots,X_n), Y_1,\ldots,Y_\ell) = 0
\]
for all $X_i \in \mathrm{GL}_m(\overline{F})$ and $Y_j \in \mathrm{M}_m(\overline{F})$. Lemma~\ref{le2.2} implies that the matrices $f(X_1,\ldots,X_n)$ are algebraic over $\overline{F}$ of degree at most $\ell$.

Moreover, there exists a matrix $P_m$ (see details in Lemma~\ref{Lem.Pm}) that is algebraic of exact degree $m$ over $F$ and can be expressed as $P_m = f(T_1,\ldots,T_n)$ for some matrices $T_i$. This implies that $\ell \geq m$.

On the other hand, by Lemma~\ref{lem3.3_Aaghabali.Bien.2019paper}, every subfield $K$ of $D$ containing $F$ has dimension at most $m$, hence $\ell \leq m$. Therefore:
\[
\ell = m = \max\left\{\mathrm{dim}_F(F(f(c_1,\ldots,c_n))) \,\middle|\, c_1,\ldots,c_n \in D\right\}.
\]
Thus, there exist elements $c_1,\ldots,c_n \in D$ such that $\mathrm{dim}_F(F(f(c_1,\ldots,c_n))) = m$, which means $F(f(c_1,\ldots,c_n))$ is a maximal subfield of~$D$.
\end{proof}

The next result shows that a similar conclusion holds when replacing the polynomial $f$ by a non-central word, i.e., an algebraic expression obtained from products of indeterminates and constants.

\begin{theorem}\label{MaxSbFi_word}
Suppose $D$ is a finite-dimensional division ring over its center $F$.  
If $w(x_1,\ldots,x_n)$ is a non-trivial word, then there exist elements $c_1,\ldots,c_n \in D$ such that $F(w(c_1,\ldots,c_n))$ is a maximal subfield of~$D$.
\end{theorem}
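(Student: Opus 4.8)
The plan is to mirror the proof of Theorem~\ref{MaxSbFi_poly} almost line for line, substituting the word map $w$ for the multilinear polynomial $f$ and using the companion lemmas developed for $Q_m$ in place of those for $P_m$. Concretely, set $\mathrm{dim}_F(D)=m^2$ and define
\[
\ell=\max\left\{\mathrm{dim}_F\bigl(F(w(c_1,\ldots,c_n))\bigr) \,\middle|\, c_1,\ldots,c_n\in D\setminus\{0\}\right\}.
\]
By maximality of $\ell$ and Lemma~\ref{le2.2}, every value $w(c_1,\ldots,c_n)$ is algebraic over $F$ of degree at most $\ell$, so $g_\ell(w(c_1,\ldots,c_n),r_1,\ldots,r_\ell)=0$ for all admissible substitutions; that is, $D$ satisfies the Laurent polynomial identity $g_\ell(w(x_1,\ldots,x_n),y_1,\ldots,y_\ell)=0$. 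Here it is essential that $w$ is a word (a Laurent monomial), so that $g_\ell(w,y_1,\ldots,y_\ell)$ is a genuine element of the free group algebra and the framework of Laurent polynomial identities from Section~2 applies; Lemma~\ref{l2.3} guarantees this expression is nonzero.

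First I would invoke Lemma~\ref{l2.2} to transfer this identity from $D$ to $\mathrm{M}_m(\overline{F})$, where $\overline{F}$ is the algebraic closure of $F$. Since the $x_i$ are substituted by invertible elements of $D^*$ on the $D$ side, the transferred identity holds when the $X_i$ range over $\mathrm{GL}_m(\overline{F})$ (and in particular over $\mathrm{SL}_m(\overline{F})$) and the $Y_j$ over $\mathrm{M}_m(\overline{F})$. Applying Lemma~\ref{le2.2} in the reverse direction then shows that every matrix $w(X_1,\ldots,X_n)$ obtained from invertible arguments is algebraic over $\overline{F}$ of degree at most $\ell$. This is the upper-bound half of the pinching argument.

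The lower bound comes from Lemma~\ref{Lem.Qm}: the matrix $Q_m\in\mathrm{SL}_m(F)$ lies in the image of the word map $w$ restricted to $\mathrm{SL}_m(\overline{F})$ and is algebraic of exact degree $m$. Since $Q_m=w(S_1,\ldots,S_n)$ for invertible $S_j$, the degree-$m$ value forces $\ell\geq m$. Combined with Lemma~\ref{lem3.3_Aaghabali.Bien.2019paper}, which gives $\ell\leq m$ because every subfield of $D$ containing $F$ has dimension at most $m$, we conclude $\ell=m$. Hence there exist $c_1,\ldots,c_n\in D^*$ with $\mathrm{dim}_F\bigl(F(w(c_1,\ldots,c_n))\bigr)=m$, so $F(w(c_1,\ldots,c_n))$ is a maximal subfield.

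I expect the only genuine subtlety to be the careful handling of invertibility. Unlike the polynomial case, a word must be evaluated on $D^*$ (and on $\mathrm{GL}_m$ or $\mathrm{SL}_m$ rather than all of $\mathrm{M}_m$), so I would verify that the transfer of the identity $g_\ell(w,y_1,\ldots,y_\ell)=0$ respects the distinction between the invertible variables $X_i$ and the free variables $Y_j$, and that Lemma~\ref{le2.2} applies to the value $w(X_1,\ldots,X_n)$ as an element of $\mathrm{M}_m(\overline{F})$ regardless of whether that value is itself invertible. This bookkeeping is routine given the machinery already assembled, and the heart of the matter is simply that Lemma~\ref{Lem.Qm} supplies the required degree-$m$ algebraic matrix in the word-image, exactly paralleling the role of $P_m$ for $f$.
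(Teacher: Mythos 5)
Your proposal is correct and is exactly the argument the paper intends: the paper's own proof of Theorem~\ref{MaxSbFi_word} is a one-line remark saying to repeat the proof of Theorem~\ref{MaxSbFi_poly} with $w$ in place of $f$ and $Q_m$ (Lemma~\ref{Lem.Qm}) in place of $P_m$, which is precisely what you carry out, including the appropriate restriction of the variables to $D^*$ and $\mathrm{GL}_m(\overline{F})$. No substantive differences.
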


\begin{proof}
The proof follows the same argument as in Proposition~\ref{MaxSbFi_poly}, replacing $f$ with $w$ and using a suitable matrix $Q_m$ corresponding to the word case.
\end{proof}

\subsection{An application}$\nonumber$

To conclude this paper, we present an application of the results just established.  A classical theorem of Jacobson asserts that if a division ring \(D\) with center \(F\) satisfies the property that every element is algebraic of bounded degree over \(F\), then \(D\) is finite‑dimensional over \(F\) (see \cite[Theorem 7]{Pa_Ja_45}).  Moreover, one obtains an explicit bound: if there exists a positive integer \(d\) such that every element of \(D\) is algebraic of degree at most \(d\) over \(F\), then
$\dim_F D \;\le\; d^2$,
as shown in \cite[Theorem 6]{Pa_ChFoLe_04}.  The method employed in \cite{Pa_ChFoLe_04} is precisely the technique introduced in Section 2.  In fact, several recent works (for example \cite{Pa_AaAkBi_18, Trang.Bien.Dung.Hai.2022paper}) also exploit this approach.  By combining the results proved in this section with the machinery of Section 2, we obtain the following extension of \cite[Theorem 6]{Pa_ChFoLe_04} and of the main theorem of \cite{Trang.Bien.Dung.Hai.2022paper}.  To state it, we first require the following auxiliary lemma.

\begin{lemma}\label{lem:auxiliary}
Let \(D\) be a division ring whose center \(F\) is infinite, and let $f(x_1,\dots,x_m)$ be either a non‑central multilinear polynomial over \(F\) or a non‑trivial group word.  If for every choice \(a_1,\dots,a_m\in D^*\) the value \(f(a_1,\dots,a_m)\) is algebraic over \(F\) of bounded degree, then \(D\) is finite‑dimensional over \(F\).
\end{lemma}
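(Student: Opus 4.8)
The plan is to treat both admissible forms of $f$ uniformly by regarding them as nonzero Laurent polynomials, and then to show that the bounded-degree hypothesis forces $D$ to satisfy a \emph{nontrivial} Laurent polynomial identity; the conclusion will follow from the rational-identity machinery already underlying this section. First I would record the elementary but essential observation that in either case $f$ is a nonzero element of the free group algebra $F\langle x_1,\dots,x_m\rangle$: a non-trivial group word is a single nonzero monomial, and a non-central multilinear polynomial is a fortiori nonzero. Thus Lemma~\ref{l2.3} applies to $f$ in both cases.

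Next, let $d$ be a uniform bound such that $f(a_1,\dots,a_m)$ is algebraic over $F$ of degree at most $d$ for every $a_1,\dots,a_m\in D^*$ (the hypothesis of bounded degree supplies exactly such a $d$). Applying Lemma~\ref{le2.2} to $\mathrm{M}_1(D)=D$, the bounded-degree condition translates precisely into the vanishing
\[
g_d\big(f(a_1,\dots,a_m),\,r_1,\dots,r_d\big)=0
\]
for all $a_1,\dots,a_m\in D^*$ and all $r_1,\dots,r_d\in D$. In other words, $D$ satisfies the Laurent polynomial identity $g_d(f(x_1,\dots,x_m),y_1,\dots,y_d)=0$. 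By Lemma~\ref{l2.3} the Laurent polynomial $g_d(f(x_1,\dots,x_m),y_1,\dots,y_d)$ is nonzero, so this is a nontrivial identity of $D$.

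The decisive step, and the one I expect to be the main obstacle, is to conclude that a division ring whose center $F$ is infinite and which satisfies a nontrivial Laurent (hence rational) polynomial identity must be finite-dimensional over $F$. This is the content of Amitsur's theory of rational identities \cite{Amitsur.1966paper} (the same source underlying Lemma~\ref{l2.2}); concretely it can be quoted in the packaged form \cite[Theorem 1.2]{Pa_Hai.Dung.Bien_2022}, asserting that a nonzero Laurent polynomial which is algebraic of bounded degree on $D^*$ forces $\dim_F D<\infty$. The hypothesis that $F$ is infinite is indispensable here, since Amitsur's dichotomy fails over finite centers. Applying this to the nontrivial identity produced above yields $\dim_F D<\infty$, which completes the argument.

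Finally, I would remark that in the multilinear case one may bypass the Laurent-polynomial formalism altogether: since every monomial of a multilinear $f$ contains each variable, $f$ vanishes whenever any argument is $0$, so the bound over $D^*$ extends automatically to all of $D$, and one may appeal directly to the polynomial version of the bounded-degree theorem. The genuinely new contribution is the group-word case, in which $w$ cannot be evaluated outside $D^*$ and is not a polynomial; there the uniform treatment through $g_d$ and the nonvanishing in Lemma~\ref{l2.3} is essential.
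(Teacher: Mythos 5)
Your proof is correct and ultimately rests on the same decisive ingredient as the paper, which disposes of the lemma in one line as a special case of \cite[Theorem 1.2]{Pa_Hai.Dung.Bien_2022}. The extra scaffolding you build (passing through the nontrivial Laurent identity $g_d(f(x_1,\dots,x_m),y_1,\dots,y_d)=0$ via Lemma~\ref{le2.2} and Lemma~\ref{l2.3}) is sound but not needed once that theorem is quoted in the form you state, since its hypothesis is exactly the bounded-degree condition on $f(D^*,\dots,D^*)$.
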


\begin{proof}
This is a special case of \cite[Theorem 1.2]{Pa_Hai.Dung.Bien_2022}.
\end{proof}

\begin{theorem}\label{thm:dimension_bound}
Let \(D\) be a division ring with infinite center \(F\), and $f(x_1,\ldots,x_m)$ be either a non‑central multilinear polynomial over $F$ or a non‑trivial group word.  Assume there is an integer \(d>0\) such that for every $a_1,\dots,a_m \in D^*$, the value \(f(a_1,\dots,a_m)\) is algebraic over \(F\) of degree at most \(d\).  Then $\dim_F D \;\le\; d^2$.
\end{theorem}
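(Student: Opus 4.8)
The plan is to combine the three ingredients already assembled: the finiteness criterion of Lemma~\ref{lem:auxiliary}, the existence of a maximal subfield generated by an evaluation of $f$ (Theorem~\ref{MaxSbFi_poly} in the polynomial case and Theorem~\ref{MaxSbFi_word} in the word case), and the dimension count of Lemma~\ref{lem3.3_Aaghabali.Bien.2019paper}.

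First I would invoke Lemma~\ref{lem:auxiliary}: the hypothesis that every value $f(a_1,\ldots,a_m)$, for $a_i\in D^*$, is algebraic over $F$ of degree at most $d$ — hence of bounded degree — forces $D$ to be finite‑dimensional over $F$. Since a finite‑dimensional division algebra has dimension a perfect square over its center, I would write $\dim_F D = m^2$ for some positive integer $m$; the goal then reduces to proving $m\le d$.

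Next I would apply the relevant existence theorem. In the polynomial case, Theorem~\ref{MaxSbFi_poly} produces elements $c_1,\ldots,c_n\in D$ such that $F(f(c_1,\ldots,c_n))$ is a maximal subfield of $D$; in the word case, Theorem~\ref{MaxSbFi_word} does the same. By Lemma~\ref{lem3.3_Aaghabali.Bien.2019paper}, maximality gives $\dim_F F(f(c_1,\ldots,c_n)) = m$. On the other hand, writing $a := f(c_1,\ldots,c_n)$, the field $F(a)$ is commutative and generated over $F$ by the single algebraic element $a$, so its dimension equals $\deg_F a$, the degree of the minimal polynomial of $a$. The hypothesis yields $\deg_F a \le d$, whence $m = \dim_F F(a) = \deg_F a \le d$, and therefore $\dim_F D = m^2 \le d^2$.

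The step requiring care is verifying that the degree hypothesis genuinely applies to the element $a$ produced by the existence theorem, since that hypothesis is stated for arguments in $D^*$. In the word case this is automatic, as words are evaluated at units. In the multilinear case one observes that if any $c_i = 0$ then $f(c_1,\ldots,c_n) = 0$, so $F(a) = F$ has dimension $1$; thus whenever the generated field is a genuine maximal subfield, so that $m \ge 2$, all of the $c_i$ are necessarily nonzero and the degree bound applies. (If $m = 1$ then $D = F$ and the inequality $1 \le d^2$ is trivial.) A second point to confirm is that the non‑centrality of $f$ over $F$ assumed here is exactly what is needed to invoke Theorem~\ref{MaxSbFi_poly}, whose construction rests on the block matrix $P_m$ and hence on $f$ being non‑central over $\mathrm{M}_2(F)$; this is the only place where the hypotheses of the present theorem must be matched against those of the earlier results.
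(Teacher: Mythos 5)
Your proposal is correct and follows essentially the same route as the paper: Lemma~\ref{lem:auxiliary} for finite-dimensionality, Theorems~\ref{MaxSbFi_poly}/\ref{MaxSbFi_word} to produce a maximal subfield generated by a value of $f$, and Lemma~\ref{lem3.3_Aaghabali.Bien.2019paper} to equate its dimension with the square root of $\dim_F D$. Your direct argument (rather than the paper's contradiction framing) and your explicit handling of the $D$ versus $D^*$ issue for multilinear evaluations are minor refinements, not a different approach.
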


\begin{proof}
By Lemma \ref{lem:auxiliary}, \(D\) is finite‑dimensional over \(F\).  Write \(\dim_F D = n^2\) for some \(n\in\mathbb{N}\).  Suppose, towards a contradiction, that \(n>d\).  Then Theorems \ref{MaxSbFi_poly} and \ref{MaxSbFi_word} guarantee the existence of \(a_1,\dots,a_m\in D^*\) for which
$K \;:=\; F\bigl(f(a_1,\dots,a_m)\bigr)$
is a maximal subfield of \(D\).  By Lemma \ref{lem3.3_Aaghabali.Bien.2019paper}, 
\(\dim_F K \;=\; n^2,\)
so \(f(a_1,\dots,a_m)\) is algebraic of degree \(n\) over \(F\).  This contradicts the bound \(n\le d\).  Hence \(\dim_F D\le d^2\).
\end{proof}

\section*{Declarations}

\begin{itemize}
\item \textbf{Ethical Approval:} The authors declare that they have no competing interests relevant to the content of this article.
\item \textbf{Competing Interests:} The authors declare no conflicts of interest.
\item \textbf{Authors' Contributions:} Both authors contributed equally to the research and manuscript preparation. The corresponding author, Le Qui Danh, carried out the final editing, which was approved by the co‑author.
\item \textbf{Availability of Data and Materials:} Not applicable; no datasets or other materials were generated or analyzed in the course of this study.
\end{itemize}


\begin{thebibliography}{999}

\bibitem{Aaghabali.Bien.2019paper} M. Aaghabali \& M.H. Bien: Certain simple maximal subfields in division rings, \textit{ Czechoslovak Mathematical Journal} \textbf{69} (2019), 1053--1060

\bibitem{Pa_AaAkBi_18} M. Aaghabali, S. Akbari, M.H. Bien: Division algebras with left algebraic commutators, \textit{Algebr. Represent. Theory} \textbf{21} (2018), 807-816 .

\bibitem{Amitsur.1966paper} S.A. Amitsur: Rational identities and applications to algebra and geometry, \textit{Journal of Algebra} \textbf{3}:3 (1966), 304--359


\bibitem{Pa_BeDrShSh_13} J.P. Bel, V. Drensky, Y. Sharif: Shirshov’s theorem and division rings that are left algebraic over a subfield, \textit{J. Pure Appl. Algebra} \textbf{217} (2013), 1605--1610. 

\bibitem{Beidar.Martindale.Mikhalev} K.I. Beidar, W.S. Martindale III, A.V. Mikhalev: \textit{Rings with generalized identities}, New York, NY: Marcel Dekker (1996)

\bibitem{Pa_Bien.Hai.Trang_2021} M.H. Bien, B.X. Hai, V.M. Trang: Algebraic commutators with respect to subnormal subgroups in division rings, \textit{Acta Mathematica Hungarica} \textbf{163}:2 (2021), 663-681.

\bibitem{Bien.Ramezan-Nassab.Trung.2024paper} M.H. Bien, M. Ramezan-Nassab, N.M.N. Trung: Surjectivity of word maps on special linear groups of degree 2,
\textit{Journal of Algebra} \textbf{650} (2024), 377--393

\bibitem{Pa_ChFoLe_04} M.A. Chebotar, Y. Fong, P.H. Lee: On division rings with algebraic commutators of bounded degree, {Manuscripta Math.} \textbf{113} (2004), 153--164.


\bibitem{Pa_Hai.Dung.Bien_2022} B.X. Hai, T.H.Dung, M.H. Bien: Almost subnormal subgroups in division rings with generalized algebraic rational identities, \textit{Journal of Algebra and Its Applications} \textbf{21}:04 (2022), 2250075


\bibitem{Pa_Thu} V. H. M. Thu, A note on symmetric elements of division rings with involution, \textit{Acta Math. Vietnam.} \textbf{47} (2022), 495-502.

\bibitem{Pa_HaTrBi_19} B. X. Hai, V. M. Trang, M. H. Bien:
A note on subgroups in a division ring that are left algebraic over a division subring,
\textit{Arch. Math.} \textbf{113}:2 (2019) 141-148.

\bibitem{Kanel-Belov.Malev.Rowen.2012paper} A. Kanel-Belov, S. Malev, \& L. Rowen: The images of non-commutative polynomials evaluated on $2\times 2$ matrices, \textit{Proceedings of the American Mathematical Society} \textbf{140}:2 (2012), 465-478.
%


\bibitem{Trang.Bien.Dung.Hai.2022paper} V.M. Trang, M.H. Bien, T.H. Dung \& B.X. Hai: On the algebraicity of bounded degree in division rings, \textit{Communications in Algebra} \textbf{50}:10 (2022), 4178--4187

\bibitem{Pa_Ja_45} N. Jacobson: Structure theory for algebraic algebras of bounded degree, \textit{Ann. of Math.} \textbf{46} (1945), 695--707.


\bibitem{Lam.2001.book} T.Y. Lam: {\it A first course in noncommutative rings (Vol. 131 of Graduate studies in mathematics)}, Springer-Verlag, Berlin, 2001.

\bibitem{Pa_Ma_00} M. Mahdavi-Hezavehi, Commutators in division rings revisited, \textit{Bull. Iran. Math. Soc.} \textbf{26} (2000), 7--88.
\end{thebibliography}
\end{document}